\newtheorem{lemma}[subsection]{Lemma}
\newtheorem{proposition}[subsection]{Proposition}
\newtheorem*{conjecture*}{Conjecture}
\newtheorem*{example*}{Example}
\newtheorem{remark}{Remark}
\newcommand{\Mod}{{\rm Mod}}
\newcommand{\Orth}{{\rm O}}
\newcommand{\GL}{{\rm GL}}
\newcommand{\SL}{{\rm SL}}
\renewcommand{\Re}{\operatorname{Re}}
\newcommand{\dee}{\partial}
\newcommand{\inv}{^{-1}}
\newcommand{\matzwei}[1]{\left(\begin{array}{rr}#1\end{array} \right)}
\newcommand{\ol}[1]{\overline{#1}}
\renewcommand{\phi}{\varphi}
\newcommand{\Quad}{{\rm Quad}}
\newcommand{\sca}[1]{\left\langle #1 \right\rangle}
\newcommand{\tr}{{\rm tr}}
\newcommand{\transp}{\mbox{}^t}
\newcommand{\ul}[1]{\underline{#1}}
\newcommand{\lvect}[1]{\left(\begin{array}{l}#1\end{array} \right)}
\DeclareMathOperator{\Diff}{D}
\newcommand{\cdop}{{\mathbb C}}
\newcommand{\edop}{{\mathbb E}}
\newcommand{\hdop}{{\mathbb H}}
\newcommand{\qdop}{{\mathbb Q}}
\newcommand{\rdop}{{\mathbb R}}
\newcommand{\zdop}{{\mathbb Z}}
\newcommand{\Ocal}{{\mathcal O}}
\author{Juan Marcos Cervi\~no}
\address{Mathematisches Institut, Universit\"at Heidelberg, 69120
Heidelberg, Germany}
\email{juan.cervino@iwr.uni-heidelberg.de}
\author{Georg Hein}
\address{Fakult\"at Mathematik, Universit\"at Duisburg-Essen, 45117 Essen,
Germany}
\email{georg.hein@uni-due.de}
\date{June 17, 2011}
\begin{document}
\subjclass[2000]{11F11, 11F27, 11E45}
\keywords{theta series, lattice invariants, root lattices}
\title{quadratic forms and their theta series -- infinitesimal
aspects}
\begin{abstract}
We study the theta map which assigns to a real quadratic form its theta
series.  We introduce two invariants reflecting whether the differential of
the theta map vanishes or is degenerate. We provide examples of lattices where
this differential is zero.  These invariants turn out to be modular forms for
integral lattices.  We illustrate this in the rank two case.
\end{abstract}
\maketitle

\section{Preliminaries and notation}
Let $\Quad_n(\rdop)$ be the real vector space of quadratic forms in $n$
variables. It is a
$\frac{n^2+n}{2}$-dimensional vector space which we identify with the
space of symmetric $n \times n$
matrices with real entries. Inside this vector space is the open cone
$\Quad_n^+(\rdop)$ of positive
definite forms.

The second space of interest is the ring $\Mod$ of holomorphic functions
on the upper half plane $\hdop$
which satisfy a certain growth condition. That is
\[ \Mod= \left\{ f : \hdop \to \cdop \, \left|
\begin{array}{l}
\text{there exists a discrete subset } M \subset \rdop_{\geq 0}\\
\text{and a function } a:M \to \cdop \text{ with } m \mapsto a_m \\
\text{bounded by two polynomials } H_1 \text{ and } H_2.\\
\text{This means, that for all }  d \in \rdop \text{ we have}\\
\#\{ m\in M | m \leq d \} \leq H_1(d) \text{ , and}\\
 \|a_m\| \leq H_2(m) \text{ for all } m \in M. \\
\text{The holomorphic function } f \text{ has an expansion}\\
f(z) = \sum_{m \in M} a_m\exp(2\pi i mz) \, .
\end{array} \right. \right\} .\]
Modular forms are typical elements of $\Mod$.  Moreover, Mod is closed
under differentiation.  We are
interested in the map $\Theta$ which assigns to a quadratic form $A \in
\Quad_n^+(\rdop)$ its theta series.
With the above notation $\Theta$ is a map
\[ \Theta: \Quad_n^+(\rdop) \to  \Mod \quad A \mapsto \Theta_A 
\quad \text{with } \Theta_A(z) = \sum_{\lambda \in \zdop^n} \exp(2 \pi i
(\transp\lambda A
\lambda)z) .\]
Two quadratic forms $A$ and $A'$ are equivalent, if and only if there
exists a $T \in \GL_n(\zdop)$ such
that $A' = \transp T A T$.  We call $A$ and $A'$ isospectral when their
theta series coincide.  Obviously,
we have $\Theta_A=\Theta_{A'}$ for two equivalent quadratic forms, in
words: equivalence implies
isospectrality.  The global Torelli theorem for the $\Theta$-map asks,
whether we can conclude from
$\Theta_A=\Theta_{A'}$ the equivalence of the quadratic forms. A local
Torelli statement investigates
whether $\Theta$ reflects all infinitesimal deformations of a given
quadratic form.  A.~Schiemann showed in
\cite{Sch2} that we have a 

\vspace*{.5em}
{\bf Global Torelli theorem for the $\Theta$ map in small dimensions.}
{\em If the rank of two quadratic forms is at most three, then they are
isospectral if and only if they are
equivalent.}
 
\vspace*{.5em}
Moreover, Schiemann gave an example for the failure of a global Torelli
for quadratic forms of rank four by
providing two inequivalent quadratic forms with the same theta series. 
Schiemann's example from
\cite{Sch1} was generalized to a family of pairs of isospectral quadratic
forms by Conway and Sloane in
\cite{CS}. Conway and Sloane conjectured that pairs of lattices in this
family were pairs of
inequivalent lattices.  The authors showed in \cite{CH3} that the
invariant $\Theta_{1,1}$ developed in
\cite{CH1} allows to distinguish the two lattices for all non-isometric
pairs in the family. Therefore we believe, that
if we add to the $\Theta$ map some of the invariants developed in
\cite{CH1} and \cite{CH2}, then a global
Torelli theorem holds true. We have for example:

\vspace*{.5em}
{\bf Conjecture.}
{\em Two quadratic forms $A$ and $A'$ of rank four are equivalent, if and
only if $\Theta_A=\Theta_{A'}$
and $\Theta_{1,1;A}=\Theta_{1,1;A'}$ holds.}

\vspace*{.5em}
In the following $\Quad_n$ denotes $\Quad_n(\rdop)$ (equivalently for
$\Quad_n^+$).
The aim of this paper is to study the infinitesimal behavior of the map
$\Theta$, in other words the
differential $\Diff\Theta$. For a lattice $A \in \Quad_n^+$ we restrict
to the hyperplane $T_A^0$ of deformations in tangent space
which leave the discriminant of $A$ unchanged. In this paper we answer the
following questions:
\begin{enumerate}
\item Are there quadratic forms $A$ such that the differential of the
$\Theta$ map is zero on the
hyperplane $T^0_A$ in the tangent space?
\item For which quadratic forms $A$ is the differential of the $\Theta$
map at $A$ degenerate?
\item\label{it:3} Is the differential of the $\Theta$ map injective for a
general quadratic form $A$?
\end{enumerate}

It turns out that $\Diff\Theta\vert_{T_A^0}$ is zero, if and only if our
invariant $\Theta_{1,1;A}$ vanishes (see
Proposition \ref{max-deg}(iii)).  Furthermore, we give examples of
quadratic forms with $\Theta_{1,1;A}=0$
coming from $p$-th roots of unity (Proposition \ref{propLp}), and from
large automorphism groups
(Proposition \ref{big-auto}) as root lattices for example.
This implies that there cannot be a local
Torelli theorem. Thus, the question
\eqref{it:3} has a negative answer when we do not restrict to a general
quadratic form $A$. We introduce a new
lattice invariant $\det^2\Diff \Theta$ which is also also a modular form
for integral quadratic forms and
vanishes exactly when the differential $\Diff\Theta$ is degenerate (cf.
Proposition \ref{HLambda}).  A simple
argument gives the local Torelli theorem (Proposition \ref{GenLocTor}) for
a general lattice.  In the
last section we compute these invariants for binary quadratic forms, and
identify all the forms with
vanishing or degenerate differential.

\section{The metric structure on the tangent space of $\Quad_n^+$}
\subsection{Quadratic forms and lattices}
For a point $A \in \Quad_n^+$ we have a positive definite pairing on the
tangent space
$T_A=T_{\Quad_n^+,A}=\Quad_n$ given by the Killing form \[ \sca{ H_1,H_2}
= 2 \cdot  \tr (A\inv H_1 A\inv
H_2) \, .\] Considering $\rdop^n$ with the scalar product given by $A$
there exists a linear transformation
$Q: \rdop^n \to \edop^n$ from $(\rdop^n,A)$ to the standard Euclidean
space which is an isometry. On the
level of matrices we have $A = \transp Q \cdot Q$. Sometimes we consider
instead of $A$ the lattice
$\Lambda=\Lambda_A$ generated by the column vectors of $Q$. The matrix $Q$
is given only up to an element
of the orthogonal group $\Orth(n)$. However, when we require $Q$ to be
upper triangular with positive
diagonal entries, then $Q$ is uniquely given (Cholesky decomposition).

For two quadratic forms $h_1$ and $h_2$ on $\edop^n$ we have the positive
definite pairing
$\sca{h_1,h_2}=h_1(\frac{\dee}{\dee x_1},\frac{\dee}{\dee x_2},
\ldots,\frac{\dee}{\dee x_n})h_2(x_1,x_2,
\ldots ,x_n)$. If we represent $h_1$ and $h_2$ by symmetric $n \times n$
matrices, then we see that the
pairing is given by $ \sca{h_1,h_2}=2 \cdot  \tr(h_1 \cdot h_2)$. Since
the quadratic form $h_i$ on
$\rdop^n$ corresponds to the quadratic form $\transp Q\inv \cdot h_i\cdot
Q\inv$ and the trace is invariant
under conjugation, we obtain the above positive definite form. Under this
identification the quadratic form
$A$ on $\rdop^n$ corresponds to the square length on $\edop^n$.\\
Thus, $H$ defines a harmonic quadratic form on $\rdop^n$ with metric given
by $A$ if and only if $\tr(A\inv
H)=0$.

\subsection{The invariant $\Theta_{1,1;A}$}
In \cite{CH1} the authors defined the lattice invariant
$\Theta_{1,1;\Lambda}$ for a lattice $\Lambda
\subset \edop^n$.  Since it is invariant under the orthogonal group
$\Orth(n)$ it gives an invariant of the
associated quadratic form. For the convenience of the reader we recall its
definition (see Theorem 4.2 in
\cite{CH1}).

Let $A: \zdop^n \to \rdop$ be a positive definite quadratic form,
corresponding to the lattice $\Lambda
\subset \edop^n$. The holomorphic function $\Theta_{1,1;A} =
\Theta_{1,1;\Lambda}$ on
the upper half plane is given by

\[\Theta_{1,1;\Lambda}(\tau) = \sum_{m \geq 0}a_m \exp(2 \pi i m \tau)
\quad \text{with} \quad
a_m= \hspace{-14pt} \sum_{\tiny \begin{array}{c} (\gamma,\delta) \in
\Lambda \times \Lambda\\
\|\gamma\|^2+\|\delta \|^2=m\\
\end{array}} \hspace{-14pt} \left(\frac{\cos^2(\measuredangle
(\gamma,\delta ))}{2}-\frac{1}{2n}\right)\|\gamma\|^2\|\delta \|^2. \]
The function $\Theta_{1,1;\Lambda}$ can also be computed using theta
series with harmonic
coefficients. For a lattice $\Lambda \subset \edop^n$ and a harmonic
function $h:
\edop^n \to \cdop$   we define
\[ \Theta_{h;\Lambda}(\tau):=\sum_{\lambda \in \Lambda} h(\lambda)
\exp(2\pi i \|\lambda\|^2 \tau) \, .\]
We can compute $\Theta_{1,1;\Lambda}$ in terms of the functions
$\Theta_{h;\Lambda}$ as follows: Let $\{
h_i\} _{i=1,\ldots ,N}$ be an orthonormal basis of the harmonic quadratic
forms on $\edop^n$. Then we have
an equality (see \cite[Theorem 3.3]{CH2})
\[ \Theta_{1,1;\Lambda} = \sum_{i=1}^N \Theta_{h_i,\Lambda}^2 \, .\]

\subsection{The splitting of the tangent space $T_A$}\label{split}
We split the tangent space as follows:
\[T_A = T_{\Quad^+_n,A}= \Quad_n = \rdop \cdot A \oplus
T_A^0  \text{ where } T_A^0 = \{ A' \in \Quad_n \, \mid \, \tr(A' \cdot
A\inv) =0
\}.\]
The hyperplane $T_A^0$ describes the infinitesimal deformations with fixed
discriminant.\\
We compute
\[ \dee_B\Theta_A:=\frac{1}{2 \pi i z} \frac{\dee}{\dee t} 
\Theta_{A+tB}\vert_{t=0} = \sum_{\lambda \in
\zdop^n} \transp\lambda B \lambda \exp(2 \pi i(\transp\lambda A \lambda)z)
\, .\]
Let $A$ be an integral quadratic form, which means $\Theta_A \in
\zdop[[q]]$. It follows that $\Theta_A$
is a modular form for a subgroup $\Gamma \subset \SL_2(\zdop)$.  For a
harmonic quadratic form $B$ we have
that $\dee_B\Theta_A$ is a modular form of weight $\frac{n+4}{2}$,
whereas $\dee_A\Theta_A = \Theta_A'$.
We conclude the following
\begin{proposition}\label{max-deg}
Let $A$ be an integer valued quadratic form of rank $n$ and level $l$.
\begin{enumerate}
\item $\Theta_A$ is a modular form of weight $\frac{n}{2}$ and level
$l$.\\
\item $\dee_B \Theta_A$ is a modular form  of weight $\frac{n+4}{2} \iff
B \in
T_A^0$. In particular, we have $\dee_B \Theta_A=0$ only for $B\in
T_A^0$.\\
\item The differential $\Diff\Theta$ vanishes identically on $T_A^0 \iff
\Theta_{1,1;A} =0$.
\end{enumerate}
\end{proposition}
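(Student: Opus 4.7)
The plan handles the three parts in order.

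Part~(i) is the classical Hecke--Schoeneberg theorem on theta series of integral positive definite quadratic forms: $\Theta_A$ is a modular form of weight $n/2$ on an appropriate congruence subgroup (e.g.\ $\Gamma_0(l)$) with a suitable character. I would invoke this standard result directly.

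For part~(ii), the first step is a direct computation of the $A$-Laplacian of $p_B(\lambda):=\transp\lambda B\lambda$: since $\dee_i\dee_j p_B=2B_{ij}$, one finds $\Delta_A p_B=2\tr(A\inv B)$, so $p_B$ is $A$-harmonic if and only if $B\in T_A^0$. Writing $A=\transp Q Q$ and substituting $\mu=Q\lambda$ identifies $\dee_B\Theta_A$ with the harmonic theta series $\Theta_{h;\Lambda}$ for the lattice $\Lambda=Q\zdop^n\subset\edop^n$ and the Euclidean-harmonic quadratic $h(\mu)=\transp\mu(\transp Q\inv B Q\inv)\mu$. The classical theorem on harmonic theta series (Hecke--Schoeneberg) then gives modularity of weight $(n+4)/2$. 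For the converse, decompose $B=\alpha A+B_0$ with $\alpha=\tr(A\inv B)/n$ and $B_0\in T_A^0$, so that $\dee_B\Theta_A=\frac{\alpha}{2\pi i}\Theta_A'+\dee_{B_0}\Theta_A$. Differentiating the modular transformation $\Theta_A(\gamma z)=\chi(\gamma)(cz+d)^{n/2}\Theta_A(z)$ for $\gamma\in\Gamma_0(l)$ with lower-left entry $c$ shows that $\Theta_A'$ picks up a correction proportional to $c(cz+d)^{n/2+1}\chi(\gamma)\Theta_A(z)$ beyond the would-be weight-$(n+4)/2$ transformation; since $\Theta_A\not\equiv0$ and $\Gamma_0(l)$ contains elements with nonzero lower-left entry, this correction does not vanish, so $\Theta_A'$ is not modular of weight $(n+4)/2$. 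Hence $\dee_B\Theta_A$ can be either modular of that weight or identically zero only if $\alpha=0$, i.e.\ $B\in T_A^0$, giving both assertions of (ii).

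For part~(iii), using the identification $\dee_B\Theta_A=\Theta_{h;\Lambda}$ above, the linear map $B\mapsto h$ is a bijection from $T_A^0$ onto the space of harmonic quadratic forms on $\edop^n$. Thus $\Diff\Theta\vert_{T_A^0}\equiv 0$ if and only if $\Theta_{h_i;\Lambda}=0$ for every element $h_i$ of a fixed \emph{real} orthonormal basis of harmonic quadratic forms. The formula $\Theta_{1,1;\Lambda}=\sum_i\Theta_{h_i;\Lambda}^2$ recalled in the excerpt gives one direction immediately. For the converse, each $\Theta_{h_i;\Lambda}$ has real Fourier coefficients, so the lowest nonzero Fourier coefficient of the sum $\sum_i\Theta_{h_i;\Lambda}^2$ is a sum of squares of real numbers, hence strictly positive whenever some $\Theta_{h_i;\Lambda}$ is nonzero; thus $\Theta_{1,1;\Lambda}=0$ forces every $\Theta_{h_i;\Lambda}$ to vanish.

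The main obstacle I anticipate is the non-modularity argument in part~(ii): it requires the explicit quasi-modular transformation of $\Theta_A'$ and a little care that the level subgroup contains elements with nonzero lower-left entry. Everything else follows cleanly from the Hecke--Schoeneberg theorem on harmonic theta series and the sum-of-squares formula for $\Theta_{1,1;\Lambda}$ already invoked in the paper.
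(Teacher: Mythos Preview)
Your proposal is correct and follows essentially the same route as the paper. For parts (i) and (ii) the paper simply declares them classical (``Only (iii) is new''), so your more detailed treatment---in particular the quasi-modularity of $\Theta_A'$ to establish the converse in (ii)---is a welcome elaboration rather than a departure. For part (iii) both arguments reduce to the sum-of-squares identity $\Theta_{1,1;A}=\sum_i(\dee_{B_i}\Theta_A)^2$; the only cosmetic difference is that the paper uses reality of the $\dee_{B_i}\Theta_A$ on the imaginary axis (so the squares are nonnegative there and the sum vanishes iff each term does), while you argue via the leading Fourier coefficient being a sum of squares of real numbers---two equivalent ways of extracting positivity from real coefficients.
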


\begin{proof}
Only (iii) is new.
Let $B_1,B_2,\ldots B_N$ be a orthonormal basis of $T_A^0$. The functions
$\dee_{B_i}\Theta_A$ take real
values on the imaginary axis in the upper half plane. Thus, since harmonic
quadratic forms are precisely
the elements of $T_A^0$ (cf. Section 2.1), we have an equivalence 
\begin{eqnarray*}
\Diff\Theta\vert_{T_A^0} \equiv 0 
& \iff & \dee_{B_i}\Theta_A = 0 \text{ for all } i=1,\ldots ,N\\
& \iff
& \left( \dee_{B_i}\Theta_A \right) ^2 = 0 \text{ for all } i=1,\ldots ,N\\
& \iff & \sum_{i=1}^N \left( \dee_{B_i}\Theta_A \right)^2 = 0 \, .\\
\end{eqnarray*}
Since $\Theta_{1,1;A}=  \sum_{i=1}^N \left( \dee_{B_i}\Theta_A\right)^2$,
this shows
the assertion.
\end{proof}

\section{Lattices with vanishing differential}\label{VanDiff}
\subsection{First examples for lattices with vanishing
differential}\label{examp}
Let us start with three quadratic forms with vanishing differential. The
vanishing of the modular form
$\Theta_{1,1;\Lambda}$ follows in all three cases from Proposition
\ref{big-auto}. Moreover, for the second
example it also follows from Proposition \ref{propLp} when setting $p=3$.

{\bf Example 1: The Gaussian integers ($A_1^2$).}
The Gaussian integers $\zdop[i] \subset \cdop = \edop^2$ form a lattice. 
The lattice corresponds to the
quadratic form $A_1^2=\matzwei{1&0\\0&1}$.  Its theta series is given by 
$\Theta_{A_1^2}(z) = 1 +4q^{1} +4q^{2} +4q^{4} +8q^{5} +4q^{8} +4q^{9}
+8q^{10} +8q^{13} +4q^{16} +8q^{17}
+ \ldots$ with $q=\exp(2\pi i z)$. We have that $\Theta_{1,1;A_1^2}= 0$.

{\bf Example 2: The Eisenstein integers ($A_2$).}
The Eisenstein integers $\zdop[ \frac{1+\sqrt{-3}}{2}]$ again form a
lattice in $\edop^2$ corresponding to
the quadratic form $A_2=\frac{1}{2}\matzwei{2&1\\1&2}$.  We compute its
theta series $\Theta_{A_2}(z) = 1 +6q^{1}
+6q^{3} +6q^{4} +12q^{7} +6q^{9} +6q^{12} +12q^{13} +6q^{16} +12q^{19}  +
\ldots$ and $\Theta_{1,1;A_2}=
0$.

{\bf Example 3: The $E_8$ lattice.} Here we have
$\Theta_{E_8}(z)=1+240q^2+2160q^4+6720q^6+17520q^8+30240q^{10}+\ldots$.
For the unimodular lattice $E_8$ it was shown
in \cite[Example 3.4]{CH2} that $\Theta_{1,1;E_8} =0$.

\subsection{Lattices with vanishing differential from $p$-th roots of
unity}
Let $p$ be an odd prime, and $\zeta = \exp(2 \pi i/p) \in \cdop$ be a
$p$-th root of unity, and
$K=\qdop(\zeta)$.  The ring $\Ocal_K=\zdop[\zeta]$ possesses $p-1$
embeddings into $\cdop$.  They form
$\frac{p-1}{2}$ conjugated pairs. Choosing one representative from each
pair we obtain the Minkowski
embedding
\[ \iota: \Ocal_K \to \cdop^\frac{p-1}{2} \quad
\text{ given by } \zeta^k \mapsto
\lvect{\zeta^k\\\zeta^{2k}\\\vdots\\\zeta^{\frac{p-1}{2}k}} \, .\]
The embedding of $\Ocal_K \to \cdop$ which sends $\zeta$ to $\zeta^k$ is
denoted by $\sigma_k$.  Let us
identify $\cdop^\frac{p-1}{2}$ with the euclidean space $\edop^{p-1}$, and
denote by $\Lambda_p$ the image
of $\Ocal_K$ in $\edop^{p-1}$. The lattice $\Lambda_p$ corresponds to the
quadratic form given by
\begin{eqnarray}\label{LpAp}
A_p = \frac{1}{2}\left(
\begin{array}{cccccc}
p-1 & -1 & -1 & \cdots & -1 & -1\\
-1 & p-1 & -1 & \cdots & -1 & -1\\
-1 & -1 & p-1 & \cdots & -1 & -1\\
\vdots & \vdots & & \ddots && \vdots\\
-1 & -1 & -1 & \cdots & p-1 & -1\\
-1 & -1 & -1 & \cdots & -1 & p-1\\
\end{array}\right) .
\end{eqnarray}
On $\cdop^\frac{p-1}{2}$ operates the cyclic group $G$ with $p$ elements
and generator $g$ acting as
multiplication by $\zeta^l$ on the $l$-th components of
$\cdop^\frac{p-1}{2}$. Since all eigenvalues of
$g$ are different from one we obtain a free action on $\Lambda_p \setminus
0$.  Having in mind that
$\Lambda_P$ is the image of $\Ocal_K$ under the Minkowski embedding we
obtain an action of the Galois group
$G_{K/\qdop}$ on $\Lambda_P$. Since $G_{K/\qdop}$ is abelian we have
$\sigma(\iota(x))=\iota(\sigma(x))$.
Using these actions we will show that $\Theta_{1,1;\Lambda_p}=0$.  In
order to prepare it, we compute for
$x,y \in \Ocal_K$  the following sum:
\begin{eqnarray}\label{sxy}
s(x,y)&: = &\sum_{k=0}^{p-1} \sum_{\sigma \in G_{K/\qdop}}
\sca{ g^k(\iota(x)),\sigma(\iota(y))}^2 \, .
\end{eqnarray}
The scalar product for $z=(z_i)_{i=1,\ldots, \frac{p-1}{2}}$ and
$z'=(z'_i)_{i=1,\ldots,
\frac{p-1}{2}}$ in $\cdop^{\frac{p-1}{2}}$ induced by its identification
with $\edop^{p-1}$ is given by
$\sca{z,z'}=\frac{1}{2} \sum_{i=1}^{\frac{p-1}{2}} \left( z_i
\ol{z'_i}+\ol{z_i}z'_i \right)$. We obtain
\begin{eqnarray*}
s(x,y)& =& \sum_{k=0}^{p-1} \sum_{\sigma \in G_{K/\qdop}} \left(
\frac{1}{2}\sum_{i=1}^{\frac{p-1}{2}}
\left(\zeta^{ki} \sigma_i(x) \ol{\sigma_i(\sigma(y))}+
\ol{\zeta^{ki} \sigma_i(x)} \sigma_i(\sigma(y)) \right)
\right)^2\\
& =& \frac{1}{4} \sum_{k=0}^{p-1} \sum_{\sigma \in G_{K/\qdop}}
\sum_{i,j=1}^{\frac{p-1}{2}}
\zeta^{k(i+j)} \sigma_i(x)\sigma_j(x)
\ol{\sigma_i(\sigma(y))\sigma_j(\sigma(y))}+ \\
& & \frac{1}{4} \sum_{k=0}^{p-1} \sum_{\sigma \in G_{K/\qdop}}
\sum_{i,j=1}^{\frac{p-1}{2}}
\zeta^{k(i-j)} \sigma_i(x) \ol{\sigma_j(x)
\sigma_i(\sigma(y))}\sigma_j(\sigma(y))+ \\
& & \frac{1}{4} \sum_{k=0}^{p-1} \sum_{\sigma \in G_{K/\qdop}}
\sum_{i,j=1}^{\frac{p-1}{2}}
\zeta^{k(-i+j)} \ol{\sigma_i(x)}\sigma_j(x)
\sigma_i(\sigma(y))\ol{\sigma_j(\sigma(y))}+ \\
& & \frac{1}{4} \sum_{k=0}^{p-1} \sum_{\sigma \in G_{K/\qdop}}
\sum_{i,j=1}^{\frac{p-1}{2}}
\zeta^{k(-i-j)} \ol{\sigma_i(x)\sigma_j(x)}
\sigma_i(\sigma(y))\sigma_j(\sigma(y)) \, . \\
\end{eqnarray*}
Since for any integer $m$ we have $\sum_{k=0}^{p-1} \zeta^{km} = \left\{
\begin{array}{ll} p & \text{ for }
p\mid m\\ 0 & \text{ otherwise }\\ \end{array}\right.$, we conclude that
the first and last summands are zero.
For the same reason we see that in the second and third summand only for
$j=i$ we have a non trivial
contribution. This yields
\[s(x,y)  = \frac{p}{2} \sum_{\sigma \in G_{K/\qdop}}
\sum_{i=1}^{\frac{p-1}{2}}  \|\sigma_i(x)\|^2 \|\sigma_i(\sigma(y))\|^2
\,.  \]
When $\sigma$ runs through $G_{K/\qdop}$ the values
$\|\sigma_i(\sigma(y))\|$ run through the set of values
$\{\|\sigma_j(y) \|\}_{j=1,\ldots,\frac{p-1}{2}}$. Eventually we obtain:
\begin{eqnarray}\label{fsxy}
s(x,y)  = p \sum_{i=1}^{\frac{p-1}{2}} \sum_{j=1}^{\frac{p-1}{2}}
\|\sigma_i(x)\|^2 \|\sigma_j(y)\|^2 = p \|\iota(x)\|^2 \|\iota(y)\|^2 \,
.
\end{eqnarray}
\begin{proposition}\label{propLp}
For the lattice $\Lambda_p$ corresponding to the quadratic form $A_p$
given in (\ref{LpAp}) the modular
form $\Theta_{1,1;\Lambda_p}$ equals zero.
\end{proposition}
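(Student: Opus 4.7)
The plan is to show that every Fourier coefficient $a_m$ of $\Theta_{1,1;\Lambda_p}$ vanishes by exploiting the full isometry group of $\Lambda_p$ assembled from the two commuting-up-to-conjugation actions used in (\ref{sxy}). Unwinding the formula for $a_m$ recalled in Section~2.2, using $\cos^2(\measuredangle(\gamma,\delta))\|\gamma\|^2\|\delta\|^2 = \sca{\gamma,\delta}^2$, one has
\[2 a_m = \sum_{\substack{(\gamma,\delta) \in \Lambda_p \times \Lambda_p\\ \|\gamma\|^2+\|\delta\|^2 = m}} \left(\sca{\gamma,\delta}^2 - \tfrac{1}{n}\|\gamma\|^2\|\delta\|^2\right)\]
with $n = p-1$. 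It therefore suffices to show that on every norm shell $S_{r,r'} := \{(\gamma,\delta) \in \Lambda_p^2 : \|\gamma\|^2 = r,\ \|\delta\|^2 = r'\}$ we have $\sum_{S_{r,r'}} \sca{\gamma,\delta}^2 = \tfrac{1}{n} \sum_{S_{r,r'}} \|\gamma\|^2\|\delta\|^2$.

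I would introduce the isometry group $H := G \cdot G_{K/\qdop}$ of $\Lambda_p$, of order $p \cdot n$, observing that $G_{K/\qdop}$ normalizes $G$ so that $H = \{g^k \sigma : 0 \le k \le p-1,\ \sigma \in G_{K/\qdop}\}$ with all $pn$ elements distinct. Since each $g^k$ is an isometry, $\sca{g^k\iota(x),\sigma\iota(y)} = \sca{\iota(x), g^{-k}\sigma\iota(y)}$, so after reindexing $k \mapsto -k$ the identity (\ref{fsxy}) rewrites as
\[\sum_{h \in H}\sca{\gamma, h\delta}^2 = p\|\gamma\|^2\|\delta\|^2 \quad \text{with } \gamma = \iota(x),\ \delta = \iota(y) \, .\]
A further change of variables $(h_1, h_2) \mapsto (h_1, h_1 h)$ turns this into the full bi-invariant average
\[\frac{1}{|H|^2}\sum_{h_1, h_2 \in H}\sca{h_1\gamma, h_2\delta}^2 = \frac{1}{|H|}\cdot p\|\gamma\|^2\|\delta\|^2 = \frac{1}{n}\|\gamma\|^2\|\delta\|^2 \, .\]

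Finally, each shell $S_{r,r'}$ is stable under $H \times H$, so by the standard change-of-variable argument, $\sum_{S_{r,r'}}\sca{\gamma,\delta}^2$ is unchanged when the summand is replaced by its $H \times H$-average, which by the previous display equals $\tfrac{1}{n}\|\gamma\|^2\|\delta\|^2$. Summing the resulting shell-by-shell identity over all $(r,r')$ with $r + r' = m$ produces the required cancellation in the displayed formula for $a_m$, yielding $\Theta_{1,1;\Lambda_p} = 0$.

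The argument is essentially formal once (\ref{fsxy}) is granted, so I do not expect a substantive obstacle; the only mildly delicate verifications are the two elementary reindexings that promote (\ref{fsxy}) to a bi-invariant average, and the confirmation that $G$ and $G_{K/\qdop}$ generate a subgroup of $\Orth(p-1)$ of the expected order $pn$ (equivalently, that they intersect trivially, which follows because a nontrivial $g^k$ fixes no rational element of $\Ocal_K$ whereas every Galois automorphism does).
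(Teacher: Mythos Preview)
Your argument is correct and follows essentially the same route as the paper: both proofs feed the identity (\ref{fsxy}) into an averaging over the isometries coming from $G$ and $G_{K/\qdop}$ to kill the summand $\sca{\gamma,\delta}^2-\tfrac{1}{p-1}\|\gamma\|^2\|\delta\|^2$. The only cosmetic difference is that the paper averages directly over $G\times G_{K/\qdop}$ (with $G$ acting on the first variable and $G_{K/\qdop}$ on the second), whereas you first assemble the semidirect product $H=G\cdot G_{K/\qdop}$ and then bi-average over $H\times H$; this costs you the two extra reindexing steps but is otherwise the same computation.
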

\begin{proof}
We write
\[ \Theta_{1,1;\Lambda_p}(z)   =  \sum_{x,y \in \Ocal_K} \left(
\sca{\iota(x),\iota(y)}^2 - \frac{1}{p-1}\|\iota(x)\|^2
\|\iota(y)\|^2\right)
q^{\|\iota(x)\|^2 + \|\iota(y)\|^2} \, . \]
If we consider for a fixed pair $(x,y) \in \Ocal_K$ the sum 
\[ s:=\sum_{g \in G} \sum_{\sigma \in G_{K/\qdop}}
\sca{g(\iota(x)),\iota(\sigma(y))}^2 - \frac{1}{p-1}\|g(\iota(x))\|^2
\|\iota(\sigma(y))\|^2 \, ,\]
then, since both groups act as isometries, we obtain using the definition
of $s(x,y)$ in (\ref{sxy}):
\[ s=s(x,y)-p\|\iota(x)\|^2 \|\iota(y)\|^2 \, .\]
Our formula (\ref{fsxy}) for $s(x,y)$ shows that $s=0$.
\end{proof}

\subsection{Large isometry groups imply the vanishing of
$\Theta_{1,1;\Lambda}$}
The vanishing of $\Theta_{1,1;\Lambda_p}$ in Proposition \ref{propLp} is
due to the fact, that the lattice
$\Lambda_p$ has ``enough'' automorphisms. More precisely, we have the
following general result.

\begin{proposition}\label{big-auto}
Let $\Lambda \subset \edop^n$ be a lattice and suppose its orthogonal
group 
$\Orth_\Lambda:=\{\phi\in\Orth(n)\mid \phi(\Lambda)=\Lambda\}$ acts
irreducibly on $\edop^n$.  Then $\Theta_{1,1;\Lambda}$ is zero.
\end{proposition}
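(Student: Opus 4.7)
The plan is to reduce the assertion $\Theta_{1,1;\Lambda}=0$ to the vanishing of $\Theta_{h;\Lambda}$ for every harmonic quadratic form $h$ on $\edop^n$. This suffices by the identity $\Theta_{1,1;\Lambda}=\sum_{i=1}^N\Theta_{h_i;\Lambda}^2$ recalled at the end of Section~2.2, applied to an orthonormal basis of harmonic forms, together with the linearity of $h\mapsto\Theta_{h;\Lambda}$. To use the hypothesis I would then exploit $\Orth_\Lambda$-symmetry: for each $\phi\in\Orth_\Lambda$ the substitution $\lambda\mapsto\phi(\lambda)$ is a length-preserving bijection of $\Lambda$, so $\Theta_{h;\Lambda}=\Theta_{h\circ\phi;\Lambda}$. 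Since $\Orth_\Lambda$ is finite (being a discrete subgroup of the compact group $\Orth(n)$), averaging yields $\Theta_{h;\Lambda}=\Theta_{\bar h;\Lambda}$, where $\bar h=|\Orth_\Lambda|^{-1}\sum_{\phi\in\Orth_\Lambda}h\circ\phi$ is an $\Orth_\Lambda$-invariant harmonic quadratic form (harmonicity is preserved because the Laplacian is $\Orth(n)$-equivariant).

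The main step is to show $\bar h=0$ from the irreducibility of $\Orth_\Lambda$ on $\edop^n$. Represent $\bar h$ via a symmetric operator $T$ on $\edop^n$ by $\bar h(v,v)=\sca{Tv,v}$. The $\Orth_\Lambda$-invariance of $\bar h$ combined with the $\Orth(n)$-invariance of the Euclidean inner product gives $\phi^{-1}T\phi=T$ for every $\phi\in\Orth_\Lambda$, i.e.\ $T$ commutes with all of $\Orth_\Lambda$. By the spectral theorem $T$ is real-diagonalizable, and each eigenspace of $T$ is $\Orth_\Lambda$-stable by this commutation; irreducibility then forces a single eigenspace, so $T=c\cdot\id$ for some $c\in\rdop$. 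Hence $\bar h(v,v)=c\|v\|^2$, whose Laplacian equals $2nc$, and harmonicity of $\bar h$ forces $c=0$. Thus $\bar h=0$, so $\Theta_{h;\Lambda}=0$ for every harmonic $h$, and summing squares yields $\Theta_{1,1;\Lambda}=0$.

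The main obstacle is the Schur-type step just above. Over $\rdop$ the commutant of an irreducible representation can a priori be $\rdop$, $\cdop$ or $\hdop$, so irreducibility alone does not give $T=c\cdot\id$; the way out is the symmetry of $T$, which by the spectral theorem produces genuine real eigenspace decompositions and bypasses any division-algebra issue. Everything else—the reduction via the sum-of-squares identity, the averaging over the finite group $\Orth_\Lambda$, and the trivial computation of the Laplacian of $c\|v\|^2$—is routine.
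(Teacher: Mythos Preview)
Your argument is correct, but it follows a genuinely different route from the paper's own proof. The paper works directly with the pairwise expansion
\[
\Theta_{1,1;\Lambda}(z)=\sum_{\lambda,\mu\in\Lambda}\Bigl(\sca{\lambda,\mu}^2-\tfrac{1}{n}\|\lambda\|^2\|\mu\|^2\Bigr)q^{\|\lambda\|^2+\|\mu\|^2},
\]
groups $\lambda$ into $\Orth_\Lambda$-orbits, and proves the key identity $\sum_{g\in\Orth_\Lambda}\sca{g(x),y}^2=\frac{|\Orth_\Lambda|}{n}\,\|x\|^2\|y\|^2$ by observing that $x\mapsto\sum_g\sca{g(x),y}^2$ is a $\Orth_\Lambda$-invariant positive definite quadratic form, hence a scalar multiple of $\|x\|^2$; the constant is then pinned down by integrating over $S^{n-1}$. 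You instead pass through the sum-of-squares formula $\Theta_{1,1;\Lambda}=\sum_i\Theta_{h_i;\Lambda}^2$, average each harmonic $h$ over $\Orth_\Lambda$, and use a Schur-type argument (via the spectral theorem) to force the averaged form to be a multiple of $\|\cdot\|^2$, which harmonicity then kills. Both arguments rest on the same underlying fact---irreducibility forces invariant quadratic forms to be scalar multiples of the Euclidean one---but apply it to different objects. Your route is a bit more conceptual and yields directly the stronger-looking statement $\Theta_{h;\Lambda}=0$ for every harmonic $h$ (equivalently $\Diff\Theta\vert_{T_A^0}=0$), while the paper's route produces the pleasant ``orbit $2$-design'' identity $\sum_g\sca{g(x),y}^2=\tfrac{|G|}{n}\|x\|^2\|y\|^2$ as a by-product.
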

\begin{proof}
Let $G=\Orth_\Lambda$ be the orthogonal group of $\Lambda$.  We consider
the following function
\[ Q: \edop^ n \times \edop^n  \to \rdop \quad (x,y) \mapsto
\sum_{g \in G} \sca{g(x),y}^2 \, . \]

Since $\edop^n$ is an irreducible representation of $G$, a vector $y \ne
0$ cannot be orthogonal to all
vectors $\{ g(x) \}_{g \in G}$ unless $x=0$.  We conclude, that for fixed
$y \ne 0$ in $\edop^n$ the
quadratic form $Q_y: \edop^n \to \rdop$ which sends $x \mapsto Q(x,y)$ is
positive definite.  By definition
$Q_y$ is $G$-invariant, i.e. $Q_y(g(x))=Q_y(x)$.  The irreducibility of
the $G$-action on $\edop^n$ implies
that $Q_y$ is proportional to the euclidean quadratic form $x \mapsto
\|x\|^2$.  Thus, $Q_y(x)=\phi(y)
\cdot \|x\|^ 2$ for some real number $\phi(y)$ depending on $y$.

Analogously, we can define for a vector $x \in \edop^n$ the quadratic form
$Q_x(y)= Q(x,y)$. The group $G$
is a subgroup of $\Orth(n)$, hence $Q_x(gy)=Q_x(y)$. Eventually, we deduce
that $Q(x,y)=c \cdot
\|x\|^2\cdot \|y\|^2$ for some constant $c \in \rdop$.  We compute the
constant $c$ by averaging $x$ over
all vectors in the unit sphere $S^{n-1}$ in $\edop^n$, and taking
$y=\transp(1,0,\ldots,0)$.  On $S^{n-1}$
we consider the unique invariant measure $d\mu$ such that $\int_{S^{n-1}}
d\mu =1$, and so
\begin{align*}
c &= \int_{S^{n-1}} \sum_{g \in G} \sca{g(x),y}^2 d\mu(x) \\
& = \sum_{g \in G} \int_{S^{n-1}} \sca{g(x),y}^2 d\mu(x) &\text{ since $G$
acts isometrically we have}\\
&= \vert G\vert \int_{S^{n-1}} \sca{x,y}^2 d\mu(x) \\
&= \vert G\vert \int_{S^{n-1}} x_1^2 d\mu(x) &\text{ since
$\sum_{i=1}^nx_i^2=1$ we conclude that }\\
&= \frac{\vert G\vert}{n} \, .
\end{align*}
Using this expression we deduce from 
\[
\Theta_{1,1;\Lambda}(z)  =   \sum_{\lambda \in
\Lambda} \sum_{\mu \in \Lambda} \left( \sca{\lambda,\mu}^2-\frac{1}{n}
\|\lambda\|^2 \|\mu\|^2 \right) q^{\|\lambda\|^2+ \|\mu\|^2}\] 
the formula
\[ \Theta_{1,1;\Lambda}(z)  =  \sum_{[\lambda]  \in \Lambda/G} \quad 
\sum_{\mu \in \Lambda}
\frac{1}{\# {\rm Stab}(\lambda)} \sum_{g \in G} \left(
\sca{g(\lambda),\mu}^2-\frac{1}{n}
\|g(\lambda)\|^2 \|\mu\|^2 \right) q^{\|g(\lambda)\|^2+ \|\mu\|^2} 
\]
by considering the $G$-orbits of the element $\lambda \in \Lambda$ . The
sum over $G$ is already zero --
which follows immediately from $g\in G\subset \Orth(n)$ and $c=\vert
G\vert/n$ --, and therefore
$\Theta_{1,1;\Lambda}\equiv0$.
\end{proof}

\begin{remark}
In his diploma thesis T.~Alfs gives a formula for the
invariant $\Theta_{1,1;\Lambda}$ when
$\Lambda =\Lambda_1 \oplus \Lambda_2$ is a direct sum of two lattices.  He
shows in \cite[Satz 6.3]{Alf}
that we have
\[ \Theta_{1,1;\Lambda_1 \oplus \Lambda_2} =
\frac{n_1}{n_1+n_2} \Theta_{1,1;\Lambda_1} \Theta^2_{\Lambda_2}
+ \frac{n_2}{n_1+n_2} \Theta_{1,1;\Lambda_2} \Theta^2_{\Lambda_1}
+ \frac{2}{n_1n_2(n_1+n_2)^2}F_1( \Theta_{\Lambda_1}, \Theta_{\Lambda_2})^2
\]
where $\Lambda_i$ is a lattice in $\edop^{n_i}$ and
$F_1(\Theta_{\Lambda_1}, \Theta_{\Lambda_2})$ is
the first Rankin--Cohen differential operator see \cite[Section
1.3]{Zag}.
\end{remark}
\begin{remark}
As a consequence we obtain that for a lattice
$\Lambda$ with $\Theta_{1,1;\Lambda}=0$ we also have
$\Theta_{1,1;\Lambda^{\oplus m}}=0$ for all $m\geq 1$. 
Using Proposition \ref{big-auto} we deduce
the vanishing of $\Theta_{1,1}$ 
for all powers of root lattices $A_n^k$, $D_n^k$, $E_n^k$.
\end{remark}

\section{Lattices with degenerate differential}\label{sec:D0}
Let now $\Lambda \subset \edop^n$ be a lattice with corresponding
quadratic form $A$. Let $\mathcal{B}=\{ h_i \}
_{i=1,\ldots, m}$ with $m= \frac{n^2+n-2}{2}$ be an orthonormal basis of
the vector space of homogeneous
harmonic polynomials of degree two.  We set $\Theta_{h_i}(\Lambda):=
\sum_{\lambda \in \Lambda}
h(\lambda)q^{\|\lambda\|^2}$.

We have seen in Section \ref{split} that the vectors
$\Theta_{h_i}(\Lambda)$ in the vector space $\Mod$ are (up to
the scaling factor $2\pi i z$) the differentials of a basis of the tangent
space $T_A^0$ of $A$ in
$\Quad_n^+$. These functions are linearly dependent if and only if the
following determinant is zero:
\[ \det{}_{\mathcal B} \Diff \Theta_\Lambda(z):= \det \left(
\begin{array}{rrcr}
\Theta_{h_1} & \Theta_{h_2} & \cdots & \Theta_{h_m}\\\\
\frac{\dee}{2 \pi i \dee z} \Theta_{h_1} &\frac{\dee}{2 \pi i \dee z}
\Theta_{h_2} & \cdots & \frac{\dee}{2 \pi i \dee z} \Theta_{h_m}\\\\
\left( \frac{\dee}{2 \pi i \dee z} \right)^{2} \Theta_{h_1} &
\left( \frac{\dee}{2 \pi i \dee z} \right)^{2} \Theta_{h_2} & \cdots &
\left( \frac{\dee}{2 \pi i \dee z} \right)^{2} \Theta_{h_m}\\ \\
\vdots \qquad & \vdots \qquad & \ddots & \vdots \qquad\\\\
\left( \frac{\dee}{2 \pi i \dee z} \right)^{m-1} \Theta_{h_1} &
\left( \frac{\dee}{2 \pi i \dee z} \right)^{m-1} \Theta_{h_2} & \cdots &
\left( \frac{\dee}{2 \pi i \dee z} \right)^{m-1} \Theta_{h_m}\\ 
\end{array} \right)\; . \] 

The scaling factor $\frac{1}{2 \pi i}$ is included to make
differentiation closed in the ring
$\zdop[[q]]$. When $\Lambda$ is an integral lattice, then it turns out
that $\det{}_{\mathcal B}\Diff 
\Theta_\Lambda$ is a modular form. Suppose that $\Lambda$ is integral of
level $N$. Then the $\Theta_{h_i}$
are modular forms of weight $w=\frac{n}{2}+2$ for the group $\Gamma_0(N)$.
We write $\Theta_{\ul{h}}$ for
the row vector $\left( \Theta_{h_1}, \ldots,\Theta_{h_m} \right)$. We
deduce inductively from
\[ \Theta_{\ul{h}}(\gamma(z)) = (cz+d)^w\Theta_{\ul{h}}(z)
\quad \text{ for } \quad \gamma=\matzwei{a&b\\c&d} \in \Gamma_0(N) \, ,
\]
that
\[ \frac{\dee^k \Theta_{\ul{h}}}{\dee z^k} (\gamma(z))
= (cz+d)^{w+2k} \frac{\dee^k \Theta_{\ul{h}}}{\dee z^k}(z)
+ \sum_{l=0}^{k-1} a_{l,k} (cz+d)^{w+k+l} \frac{\dee^l
\Theta_{\ul{h}}}{\dee z^l}(z)
\]
with constants $a_{l,k}$ depending only on $l$, $k$, $w$ and $c$. 
Therefore, modulo the span of 
\[\left\{ (cz+d)^{w+k+l} \frac{\dee^l \Theta_{\ul{h}}}{\dee
z^l}\,\Big\vert\, l = 0,\ldots,k-1\right\}\,
\]
the vector $\frac{\dee^k \Theta_{\ul{h}}}{\dee z^k}$ behaves like a
modular form of weight $w+2k$.  When
computing the determinant above, we may hence assume that $\frac{\dee^k
\Theta_{\ul{h}}}{\dee z^k}$ is a modular form of weight  $w+2k$ since
adding of rows does not alter the
determinant. We conclude that $\det{}_{\mathcal B} \Diff\Theta_\Lambda$ is
a modular form of weight $m(w+m-1)=
\frac{(n+2)^2n(n-1)}{4}$. Since changing the orthonormal basis $\mathcal
B$ multiplies the original
determinant by $\pm1$, we define
$\det^2\Diff\Theta_\Lambda:=\left(\det{}_{\mathcal
B}\Diff\Theta_\Lambda \right)^2$ for
any orthonormal basis $\mathcal B$ of degree two harmonic polynomials and
obtain

\begin{proposition}\label{HLambda}
The function $\det^2 \Diff\Theta_\Lambda$ is a lattice invariant.  We have
an equivalence
\[ \det{}^2 \Diff\Theta_\Lambda = 0 \iff \text{ the differential of the
theta map is degenerated at }
\Lambda \, .\]
If $\Lambda$ is an integral lattice of level $N$, then
$\det^2\Diff\Theta_\Lambda$ is a modular form of
weight $\frac{(n+2)^2n(n-1)}{2}$ for $\Gamma_0(N)$.
\end{proposition}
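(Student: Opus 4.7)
The plan is to verify the three assertions (basis independence, characterization of vanishing via degeneracy, and modularity with the stated weight) in turn; most of the modularity argument is already assembled in the paragraph preceding the proposition, so I would treat that step as essentially a matter of bookkeeping.

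For basis independence, I would observe that if $\mathcal{B}' = \{h_i'\}$ is a second orthonormal basis of harmonic quadratic forms, then $h_i' = \sum_j O_{ij} h_j$ for an orthogonal matrix $O \in \Orth(m)$. Because $\Theta_{h;\Lambda}$ depends linearly on $h$, each row of the matrix computed with respect to $\mathcal{B}'$ is obtained from the corresponding row with respect to $\mathcal{B}$ by right multiplication by $\transp O$; hence the two determinants differ by $\det O = \pm 1$, and their squares agree. This gives the well-definedness of $\det^2 \Diff\Theta_\Lambda$ as a lattice invariant.

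For the characterization of vanishing, I would use the identification (already noted in Section \ref{split} and Proposition \ref{max-deg}) between the harmonic part $T_A^0$ of the tangent space and the space spanned by the harmonic theta series $\Theta_{h_i;\Lambda}$ via $B \mapsto \frac{1}{2\pi i z}\dee_B\Theta_A$. The differential $\Diff\Theta\vert_{T_A^0}$ is therefore degenerate if and only if the functions $\Theta_{h_1;\Lambda},\dots,\Theta_{h_m;\Lambda}$ are linearly dependent as holomorphic functions on $\hdop$. Since these are holomorphic on a connected domain, linear dependence over $\cdop$ is equivalent to the vanishing of their Wronskian determinant. The determinant displayed in the proposition is precisely that Wronskian, up to a nonzero constant factor that accounts for the replacement of $\frac{\dee}{\dee z}$ by $\frac{1}{2\pi i}\frac{\dee}{\dee z}$ (an invertible change of variables $z \mapsto q$ on the punctured disc). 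Hence $\det^2\Diff\Theta_\Lambda \equiv 0$ if and only if $\Diff\Theta\vert_{T_A^0}$ is degenerate.

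For the modularity claim, I would use the quasi-modular transformation laws derived just before the proposition. Iterated differentiation of a weight-$w$ modular form produces a quasi-modular form whose weight-$(w+2k)$ leading part is $\frac{\dee^k \Theta_{\ul{h}}}{\dee z^k}$ modulo lower-order derivatives. Since subtracting multiples of earlier rows does not change the determinant, I would explicitly row-reduce the matrix so that the $k$-th row transforms like a modular form of weight $w+2k$ with $w = \frac{n}{2}+2$ and $k=0,\dots,m-1$, $m = \frac{(n-1)(n+2)}{2}$. The determinant is then a modular form of weight $\sum_{k=0}^{m-1}(w+2k) = m(w+m-1) = \frac{(n-1)(n+2)}{2}\cdot\frac{n(n+2)}{2} = \frac{n(n-1)(n+2)^2}{4}$, and squaring doubles this to the announced value. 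The level $N$ is preserved throughout because all $\Theta_{h_i;\Lambda}$ lie in $M(\Gamma_0(N))$.

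The main obstacle is the second step: one has to be comfortable asserting that the Wronskian of finitely many holomorphic functions on $\hdop$ vanishes identically exactly when the functions are $\cdop$-linearly dependent, and that passing from $\frac{\dee}{\dee z}$ to $q\frac{d}{dq}$ only rescales this Wronskian by a nowhere-vanishing factor. Once this is made precise, the modularity step is a mechanical consequence of the transformation formulas already recorded in the text.
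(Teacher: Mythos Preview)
Your proposal is correct and follows essentially the same route as the paper: the proposition is stated there as a summary of the discussion immediately preceding it, which (i) notes that a change of orthonormal basis multiplies the determinant by $\pm 1$, (ii) asserts that the determinant vanishes exactly when the $\Theta_{h_i;\Lambda}$ are linearly dependent, and (iii) derives the weight via the inductive transformation formula and row reduction. You supply more detail on step (ii) by invoking the Wronskian criterion for holomorphic functions, which the paper simply asserts; otherwise the arguments coincide.
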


We will write $\det\Diff\Theta_\Lambda$ for $\det{}_{\mathcal
B}\Diff\Theta_\Lambda$ whenever the basis
$\mathcal B$ is understood from the context or is irrelevant. For example when
considering whether $\det \Diff\Theta_\Lambda=0$.

\begin{remark}
Similar to the constructions in Section \ref{VanDiff} we can now construct
lattices $\Lambda$ such that
$\det \Diff\Theta_\Lambda=0$.
Geometrically
speaking, these are the lattices where the local Torelli theorem fails for
the theta map. Since all the
examples with vanishing differential from Section \ref{VanDiff} give also
lattices with degenerate
differential we give here only one example:\\ Let $\Lambda \subset \edop^n
$ be a lattice that is invariant
under a reflection $\rho: \edop^n \to \edop^n$. Then we have $\det
\Diff\Theta_\Lambda=0$. For a proof of
this assertion take two coordinates $x_1$ and $x_2$ on $\edop^n$ such that
$\rho^*x_i=(-1)^ix_i$. Then
$\Theta_{x_1x_2;\Lambda} = 0$, and so is $\det {\rm D}\Theta_\Lambda=0$.
\end{remark}

\begin{proposition}\label{GenLocTor}
{\bf (Generic local Torelli theorem)} For a general lattice $\Lambda
\subset \edop^n$ we have $\det\Diff 
\Theta_\Lambda\ne 0$. This means, the $\Theta$-map is generically locally
injective.
\end{proposition}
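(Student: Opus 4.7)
The plan is to reduce the non-vanishing of $\det\Diff\Theta_\Lambda$ to the $\cdop$-linear independence of the theta series $\Theta_{h_1;\Lambda},\ldots,\Theta_{h_m;\Lambda}$, and then to establish this independence on a dense subset of lattices. Since these are holomorphic functions on the connected domain $\hdop$, their Wronskian vanishes identically if and only if they are linearly dependent; so I would recast the question in terms of linear independence of the $\Theta_{h_i;\Lambda}$.

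The generic condition I would use is that each nonzero ``shell'' $\{\lambda\in\Lambda\mid\|\lambda\|^2=r\}$ consists of a single antipodal pair $\{\pm\lambda\}$. Parametrising $\Lambda$ by its Gram matrix $A\in\Quad_n^+$, the locus where two vectors $\lambda\ne\pm\mu$ of $\zdop^n$ satisfy $\transp\lambda A\lambda=\transp\mu A\mu$ is a proper affine hyperplane of $\Quad_n$, since the rank-one symmetric matrices $\lambda\transp\lambda$ and $\mu\transp\mu$ are distinct. Removing this countable union of hyperplanes from the open cone $\Quad_n^+$ leaves a residual, and in particular dense, subset $U$ by the Baire category theorem.

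For $\Lambda$ corresponding to some $A\in U$, suppose $h=\sum_i c_i h_i$ satisfies $\Theta_{h;\Lambda}\equiv 0$. Since the exponentials $\exp(2\pi i r z)$ for distinct $r\in\rdop$ are linearly independent on $\hdop$, grouping by $r$ yields $\sum_{\|\lambda\|^2=r}h(\lambda)=0$ for every $r\ge 0$; by the shell structure each such sum reduces to $h(\lambda)+h(-\lambda)=2h(\lambda)$, since $h$ is an even polynomial. Hence $h$ vanishes on $\Lambda$, and since a full-rank lattice is Zariski dense in $\edop^n$, we conclude $h\equiv 0$. Linear independence of the basis $\{h_i\}$ then forces all $c_i=0$, giving the desired linear independence of the theta series on $U$ and hence the non-vanishing of $\det\Diff\Theta_\Lambda$ there.

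The main obstacle is set-theoretic: one must ensure the countable family of ``bad'' hyperplanes does not exhaust $\Quad_n^+$. This is exactly the Baire category argument above, resting on the rank-one observation that $\lambda\transp\lambda=\mu\transp\mu$ forces $\mu=\pm\lambda$. Everything else is a comparison of coefficients in a generalised exponential series together with the standard equivalence between linear independence of holomorphic functions on a connected domain and non-vanishing of their Wronskian.
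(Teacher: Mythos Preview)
Your proof is correct and follows essentially the same line as the paper's: both reduce to the linear independence of the $\Theta_{h_i;\Lambda}$, establish this by arranging that every nonzero shell $\{\lambda:\|\lambda\|^2=r\}$ is a single antipodal pair, and then conclude that $\Theta_{h;\Lambda}=0$ forces $h|_\Lambda\equiv 0$ and hence $h=0$. The only difference is in how the generic lattice is produced: the paper exhibits one explicitly by choosing the Gram entries $\{a_{ij}\}_{i\le j}$ to be $\qdop$-linearly independent in $\rdop$, while you remove the countable family of coincidence hyperplanes and invoke Baire category to get a dense residual set.
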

\begin{proof}

It is enough to show that there exists a lattice $\Lambda$ such that the
differential of the $\Theta$-map
is of full rank in $\Lambda$. This will imply the statement of the
proposition.

We start with a positive quadratic form $A\in \Quad_n^+$ given by a
symmetric matrix
$A=(a_{ij})_{i,j=1,\ldots n}$. We assume furthermore that the set $\{
a_{ij}\}_{1 \leq i \leq j \leq n}$ is
linear independent in the $\qdop$ vector space $\rdop$. This linear
independence implies by definition that
for two vectors $\ul{m}$ and $\ul m'$ in $\zdop^n$ we can conclude from
$A(\ul m) = A(\ul m')$ that $\ul m
=\pm\ul m'$.

Let now $\Lambda \subset \edop^n$ be the lattice associated to $A$.  We
claim that $\det\Diff
\Theta_{\Lambda}\ne 0$. On the contrary, assume that $\det
\Diff\Theta_{\Lambda}=0$. This implies by
Proposition \ref{HLambda} that the functions $\{ \Theta_{h_i;\Lambda} \}$
are linearly dependent where the
$h_i$ form a basis of the harmonic homogeneous quadratic forms on
$\edop^n$. Therefore there exists a
harmonic quadratic form $h$ such that $\Theta_{h;\Lambda} =0$.  Since for
two $\lambda,\lambda' \in
\Lambda$ we have $\|\lambda \|^2=\|\lambda'\|^2$ only for $\lambda'=\pm
\lambda$, we conclude that the
coefficient of $\exp(2 \pi i \|\lambda\|^2 z)$ in $\Theta_{h;\Lambda}$ is
$h(\lambda)+h(-\lambda)=2h(\lambda)$.  This implies $h\vert_{\Lambda}
\equiv 0$, and thus $h=0$. Hence
D$\Theta_{\Lambda}$ has full rank.
\end{proof}

\section{Example: Lattices in dimension two}
\subsection{Lattices with vanishing differential in dimension 2}
\begin{proposition}
The above two examples -- the Gaussian and the Eisenstein integers -- are
up to scalar multiples the only
examples of lattices with vanishing $\Theta$-differential in dimension two.
\end{proposition}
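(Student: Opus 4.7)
The plan is to reduce the hypothesis $\Theta_{1,1;\Lambda}=0$ to a single vanishing condition on a sum of complex squares, and then exploit the very restricted geometric possibilities for the shortest vectors of a planar lattice.

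Identifying $\edop^2$ with $\cdop$, the space of homogeneous harmonic polynomials of degree two is two-dimensional and spanned by the real and imaginary parts of $h(z)=z^2$. By Proposition \ref{max-deg}(iii) (and the identity $\Theta_{1,1;\Lambda}=\sum_i\Theta_{h_i;\Lambda}^2$ together with the reality of each $\Theta_{h_i;\Lambda}$ on the imaginary axis), the vanishing of $\Theta_{1,1;\Lambda}$ is equivalent to
\[ \Theta_{z^2;\Lambda}(z) \;=\; \sum_{\lambda\in\Lambda} \lambda^2\,q^{\|\lambda\|^2} \;=\; 0, \]
and hence, comparing $q$-coefficients, to the system
\[ \sum_{\lambda\in\Lambda,\ \|\lambda\|^2=r} \lambda^2 \;=\; 0 \qquad\text{for every } r\ge0. \]

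Next I apply this coefficient-by-coefficient identity at the lowest nontrivial level $r_1:=\min\{\|\lambda\|^2\mid \lambda\in\Lambda\setminus 0\}$, and let $S_{r_1}$ be the set of shortest vectors. A standard geometric argument shows that two shortest vectors in a rank-two lattice must subtend an angle of at least $60^{\circ}$ (else their difference would be shorter), so $\#S_{r_1}\in\{2,4,6\}$. If $\#S_{r_1}=2$, then $S_{r_1}=\{\pm\lambda_1\}$ and the sum equals $2\lambda_1^2\ne 0$, ruling this case out. If $\#S_{r_1}=4$, write $S_{r_1}=\{\pm\lambda_1,\pm\lambda_2\}$; the equation $\lambda_1^2+\lambda_2^2=0$ in $\cdop$ forces $\lambda_2=\pm i\lambda_1$, so $\Lambda$ contains a square sublattice $\Lambda':=\zdop\lambda_1+\zdop\lambda_2$ similar to $\zdop[i]$. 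If $\#S_{r_1}=6$, the six shortest vectors are forced to be equally spaced at $60^{\circ}$ and they generate a hexagonal sublattice $\Lambda'\subseteq\Lambda$ similar to the Eisenstein lattice (with the sum vanishing automatically because $1+\zeta_3+\zeta_3^2=0$).

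The step I expect to be the main obstacle is showing that in the last two cases $\Lambda=\Lambda'$, rather than merely $\Lambda\supseteq\Lambda'$. For this I would invoke the covering radius of $\Lambda'$: the Voronoi cell of the square sublattice has circumradius $\|\lambda_1\|/\sqrt{2}$, while that of the hexagonal sublattice has circumradius $\|\lambda_1\|/\sqrt{3}$, both strictly less than $\|\lambda_1\|$. Any coset representative of a putative element of $\Lambda\setminus\Lambda'$ could therefore be translated by an element of $\Lambda'$ to produce a vector of $\Lambda$ of length strictly less than $\|\lambda_1\|$, contradicting the minimality of $r_1$. Consequently $\Lambda=\Lambda'$, and $\Lambda$ is, up to rotation and positive scaling, either the Gaussian integers or the Eisenstein integers, as required.
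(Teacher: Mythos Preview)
Your proof is correct and follows the same skeleton as the paper's: both scale, look at the shortest vectors of $\Lambda$, split into the cases $\#S_{r_1}\in\{2,4,6\}$, and read off a constraint from the lowest nontrivial Fourier coefficient. The one genuine variation is your passage to the single complex-valued series $\Theta_{z^2;\Lambda}$: this replaces the paper's direct computation of the $q^2$-coefficient of the double sum defining $\Theta_{1,1;\Lambda}$ by the linear condition $\sum_{\|\lambda\|^2=r_1}\lambda^2=0$, from which the orthogonality in the four-vector case drops out immediately as $\lambda_2=\pm i\lambda_1$. Your covering-radius argument making $\Lambda=\Lambda'$ explicit is also more detailed than the paper, which in cases 2 and 3 simply asserts that the square (resp.\ hexagonal) lattice is the only possibility; both are standard, so this is a matter of taste rather than substance.
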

\begin{proof}
We consider the lattice vectors $\lambda \ne 0$ of minimal length in our
lattice $\Lambda \subset \edop^2$.
These appear pairwise, since $-\lambda $ and $\lambda $ have the same
length. Furthermore, we scale
$\Lambda$ such that $\| \pm \lambda\|=1$ holds for a pair of vectors of
minimal positive length. There are
three cases:\\
{\em Case 1: There exists one pair $(\lambda,-\lambda)$ of vectors of
minimal length.}
The coefficient $c_2$ of $q^2$ in  $\Theta_{1,1;\Lambda}$ is given by \[
c_2= 4(2
\sca{\lambda,\lambda}^2-\|\lambda\|^4) =4 \, .\] Thus,
$\Theta_{1,1;\Lambda}$ can not be zero.

{\em Case 2: There exists two pairs $(\lambda,-\lambda)$ $(\mu,-\mu)$ of
vectors of minimal length.} Again
we compute the coefficient $c_2$ of $q^2$ to be
\[ c_2= 4(2 \sca{\lambda,\lambda}^2-\|\lambda\|^4) + 
4(2 \sca{\mu,\mu}^2-\|\mu\|^4) + 8(2
\sca{\lambda,\mu}^2-\|\lambda\|^2\|\mu\|^2)  = 16 \sca{\lambda,\mu}^2 \,
.  \]
Thus, we see that for $\Theta_{1,1;\Lambda}=0$ we must have
$\sca{\lambda,\mu}=0$. This is the case of the
Gaussian integers (Example 1 in \S \ref{examp}).

{\em Case 3: There exists three pairs of vectors of minimal length.}
Indeed this happens only for the
Eisenstein lattice (Example 2 in \S \ref{examp}).
\end{proof}

\subsection{Lattices with degenerate differential in dimension two}
We will now investigate for which lattices in dimension two the
differential is degenerate. We consider the
harmonic functions $h_1(x,y)=x^2-y^2$ and $h_2(x,y)=xy$. They form a
orthogonal basis of the harmonic
functions of degree two in two variables. Thus, for a lattice $\Lambda
\subset \edop^2$ the differential of
the $\Theta$-map (restricted to $T_A^0$) is spanned by $\Theta_{h_1}$ and
$\Theta_{h_2}$ with
\[ \Theta_{h_i} = \sum_{\lambda \in \Lambda}
h_i(\lambda)q^{\|\lambda\|^2} \, .\]
The differential degenerates whenever the difference
$\Theta_{h_1}\Theta_{h_2}'-\Theta_{h_1}'\Theta_{h_2}$
is zero. We compute hence this function, which is the function $\det
\Diff\Theta_\Lambda$ with the
appropriate scaling (cf. Section \ref{sec:D0}):
\[\det \Diff\Theta_\Lambda=\frac{1}{2 \pi i}
(\Theta_{h_1}\Theta_{h_2}'-\Theta_{h_1}'\Theta_{h_2}) =
\sum_{(\lambda,\mu) \in \Lambda^2}
h_1(\lambda)h_2(\mu)(\|\mu\|^2-\|\lambda\|^2)
q^{\|\lambda\|^2+\|\mu\|^2} . \]
We can symmetrize this expression by interchanging the lattice elements
$\lambda$ and $\mu$ in this
summation to obtain:
\[2\det \Diff\Theta_\Lambda =  \sum_{(\lambda,\mu) \in \Lambda^2}
(h_1(\lambda)h_2(\mu)-h_1(\mu)h_2(\lambda)) (\|\mu\|^2-\|\lambda\|^2)
q^{\|\lambda\|^2+\|\mu\|^2} \, .\]
Expanding these terms yields
\begin{eqnarray}\label{eq5}
\det \Diff\Theta_\Lambda&=& \frac{1}{2} \sum_{(\lambda,\mu) \in \Lambda^2}
\sca{\lambda,\mu} \det(\lambda,\mu) (\|\mu\|^2-\|\lambda\|^2)
q^{\|\lambda\|^2+\|\mu\|^2} \,,
\end{eqnarray}
where for $\lambda=(\lambda_1,\lambda_2)$, and $\mu=(\mu_1,\mu_2)$ we write
$\det(\lambda,\mu):=\lambda_1\mu_2-\lambda_2\mu_1$ for the usual
determinant.  Whereas $\sca{\lambda,\mu}$,
$\|\lambda\|^2$, and $\|\mu\|^2$ are $\Orth(2)$-invariants the determinant
transforms like
$\det(\gamma(\lambda),\gamma(\mu)) = \det(\gamma)\det(\lambda,\mu)$.  Thus
$\det \Diff\Theta_\Lambda$ is
a half-invariant for the orthogonal group, since $\det
\Diff\Theta_{\gamma(\Lambda)} = \det(\gamma) \det
\Diff\Theta_\Lambda$.  By Proposition \ref{HLambda} $\det
\Diff\Theta_\Lambda$ is zero if and only if
the differential of the $\Theta$ map is degenerate. Next we determine its
degeneration locus.

\begin{lemma}\label{deg-dim-2}
For a lattice $\Lambda \subset \edop^2$ we have $\det
\Diff\Theta_\Lambda=0$ in the following three
cases:
\begin{enumerate}
\item $\Lambda$ is spanned by two orthogonal vectors.
\item $\Lambda$ is spanned by a pair $(\lambda_1,\lambda_2)$ of vectors
with $2\sca{\lambda_1,\lambda_2}=\|\lambda_1\|^2$.
\item $\Lambda$ is spanned by two vectors of the same length.
\end{enumerate}
\end{lemma}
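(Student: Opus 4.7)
The plan is to exhibit, in each of the three cases, a reflection of $\edop^2$ that preserves $\Lambda$, and then invoke the remark following Proposition \ref{HLambda}, which guarantees that reflection-invariant lattices have $\det\Diff\Theta_\Lambda=0$. Equivalently, and arguably more transparently, formula \eqref{eq5} makes the implication immediate: if $\rho\in\Orth(2)$ with $\det\rho=-1$ stabilizes $\Lambda$, then under the change of summation variable $(\lambda,\mu)\mapsto(\rho\lambda,\rho\mu)$ the inner product $\sca{\lambda,\mu}$ and the square norms are preserved while $\det(\lambda,\mu)$ flips sign, so the series equals its own negative and therefore vanishes identically.

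For case (i) the reflection is obvious: negate one of the two orthogonal spanning vectors while fixing the other. For case (iii), with $\|\lambda_1\|=\|\lambda_2\|$, the reflection across the line $\rdop\cdot(\lambda_1+\lambda_2)$ fixes $\lambda_1+\lambda_2$ and negates $\lambda_1-\lambda_2$ (the two vectors being orthogonal precisely because $\|\lambda_1\|=\|\lambda_2\|$); hence it interchanges $\lambda_1$ with $\lambda_2$ and stabilizes $\Lambda$. For case (ii), the reflection $\rho$ across the line orthogonal to $\lambda_1$ sends $\lambda_1\mapsto-\lambda_1$, and from the formula $\rho(v)=v-2\sca{v,\lambda_1}\|\lambda_1\|^{-2}\lambda_1$ together with the hypothesis $2\sca{\lambda_1,\lambda_2}=\|\lambda_1\|^2$ one obtains $\rho(\lambda_2)=\lambda_2-\lambda_1\in\Lambda$.

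No substantial obstacle is anticipated. The only point requiring a short verification is case (ii), where one must check that the reflection sends the second basis vector back into the lattice; this is the one-line computation above and uses the hypothesis in exactly the form stated. The remaining two cases are essentially inspection once the symmetrization viewpoint on \eqref{eq5} is in hand.
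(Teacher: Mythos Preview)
Your proof is correct and follows essentially the same reflection-based approach as the paper, with only a cosmetic difference in case (ii): the paper reflects in the line through $\lambda_1$ rather than the line orthogonal to it, but either choice works. Note that the paper's proof goes on to establish the converse---that outside these three cases $\det\Diff\Theta_\Lambda\ne 0$, by computing the coefficient of $q^{\|\lambda_1\|^2+\|\lambda_2\|^2}$ in \eqref{eq5}---which the lemma as stated does not require but which is used immediately afterward.
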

\begin{proof}
In all three cases we use the following fact: If there exists an element
$\gamma \in \Orth(2)$ of
determinant $\det(\gamma)=-1$ such that $\gamma(\Lambda)=\Lambda$, then
the formula $\det\Diff 
\Theta_{\gamma(\Lambda)}=\det(\gamma)\det \Diff\Theta_\Lambda$ implies
$\det \Diff\Theta_\Lambda =
0$.  Identifying $\edop^2$ with the complex numbers $\cdop$ the maps
$\gamma$ are reflections $r_\tau$ with
$r_\tau(z)= \frac{\tau \ol z}{\ol \tau}$ is the reflection on the real
line generated by $\tau$.\\
\begin{enumerate}
\item If $\Lambda= \zdop \lambda_1 \oplus \zdop \lambda_2$ with $\lambda_1
\bot \lambda_2$, then we can
take the reflection $r_{\lambda_1}$ or $r_{\lambda_2}$.
\item If $\Lambda= \zdop \lambda_1 \oplus \zdop \lambda_2$ with
$2\sca{\lambda_1,\lambda_2}=\|\lambda_1\|^2$, then the reflection
$r_{\lambda_1}$ preserves $\Lambda$.
\item If  $\Lambda= \zdop \lambda_1 \oplus \zdop \lambda_2$ with
$\|\lambda_1\|= \|\lambda_2\|$, then the
reflection $r_{\lambda_1+\lambda_2}$ interchanges $\lambda_1$ and
$\lambda_2$.
\end{enumerate}
Thus in all the above cases we see that $\det \Diff\Theta_\Lambda=0$. If
$\Lambda$ is a lattice which
does not belong to the three cases above, then we see that for the pairs
$0$, $(\pm \lambda_1)$, $(\pm
\lambda_2)$, $(\pm \lambda_3), \ldots $ of lattice vectors ordered by
length we have strict inequalities
$0< \|l_1 \| < \|l_2\|<\|l_3\|$. Using the $q$-expansion of $\det
\Diff\Theta_\Lambda$ given in
(\ref{eq5}) we can compute the coefficient of
$q^{\|\lambda_1\|^2+\|\lambda_2\|^2}$ to be
$2\sca{\lambda_1,\lambda_2} \det(\lambda_1,\lambda_2) \left(
\|\lambda_2\|^2-\|\lambda_1\|^2 \right)$. In
this remaining case, each of these three factors (besides $2$) are non
zero, giving $\det {\rm
D}\Theta_\Lambda\neq0$.
\end{proof}

\setlength{\unitlength}{0.4pt}
\begin{picture}(250,300)(150,200)
\linethickness{0.1pt}
\bezier{300}(200,100)(285,105)(350,60)
\put(350,60){\line(0,1){390}}
\put(200,100){\line(0,1){350}}
\put(350,60){\circle*{7}}
\put(200,100){\circle*{7}}
\put(350,30){\makebox(0,0)[b]{$P$}}
\put(200,65){\makebox(0,0)[b]{$Q$}}
\put(215,300){\makebox(0,0)[b]{$l_1$}}
\put(365,300){\makebox(0,0)[b]{$l_2$}}
\put(275,100){\makebox(0,0)[b]{$l_3$}}
\end{picture} 
\hspace{2em}
\begin{minipage}{9cm}
\[ F= \left\{ \tau \in \hdop \, \text{ with }  \|\tau \| \geq 1 \text{,
and }
0 \leq \Re(\tau) \leq \frac{1}{2} \right\}\]
The differential of the $\Theta$ map is degenerate at $\tau
\in F \iff \tau \in (l_1 \cup l_2 \cup l_3)$.\\

The lattices parameterized by $l_i$ correspond to the case (i) in Lemma
\ref{deg-dim-2}. For example, the
lattices parameterized by $\tau \in l_1$ are spanned by two orthogonal
vectors.\\

The differential of the $\Theta$ map maximally degenerates at the two
points $P$ and $Q$.


\text{ }

\end{minipage}\\
Figure: The fundamental domain $F$ for lattices in the upper half plane
$\hdop$.

\subsection{The weak local Torelli theorem}
The vanishing of $\det \Diff\Theta_\Lambda$ along the curves $l_i$ may at
a first glance be interpreted
as a failure of the local Torelli theorem.  However, any continuous
lattice invariant on the upper half
plane $\hdop$ with values in a vector space $V$ must send the two lattices
generated by $(1,a\sqrt{-1} \pm
b)$ for real $a$ and $b$ to the same element of $V$, independent of the
sign. Thus, the differential of the
invariant must be degenerate at the point $(1,a\sqrt{-1})$. Hence the
differential is degenerate along
$l_1$. Analogously we see, that the differential must be degenerate along
$l_2$ and $l_3$.

The global Torelli theorem is an elementary exercise for rank two
lattices.  Indeed, the first three
lengths of lattice vectors and their multiplicities give the Gram matrix
of the lattice.

\subsection*{Acknowledgment}
This work has been supported by the SFB/TR 45
``Periods, moduli spaces and arithmetic of algebraic varieties''.

\end{document}